\newtheorem{theorem}{Theorem}
\newtheorem{lemma}[theorem]{Lemma}
\newtheorem{corollary}[theorem]{Corollary}
\newcommand{\RR}{\mathbb{R}}
\begin{document}

\author{Mark Agranovsky\footnote{Department of Mathematics, Bar-
Ilan University,
Ramat-Gan, 52900, Israel. agranovs@macs.biu.ac.il} and Peter 
Kuchment\footnote{Mathematics Department,
Texas A\& M Univeristy, College Station, TX 77845, USA. 
kuchment@math.tamu.edu}}
\title{Uniqueness of reconstruction and an inversion procedure 
for thermoacoustic and photoacoustic tomography}
%\date{}
\maketitle
\begin{abstract}
The paper contains a simple approach to reconstruction in
Thermoacoustic and Photoacoustic Tomography. The technique works 
for any geometry of point detectors placement and for variable 
sound speed satisfying a non-trapping condition. A uniqueness of 
reconstruction result is also obtained.

{\bf Keywords}: Tomography, wave equation, thermoacoustic, uniqueness, inversion.
{\bf AMS classification:} 35L05, 92C55, 65R32, 44A12
\end{abstract}
%\tableofcontents

%%%%%%%%%%%%%%%%%%%%%
\section{Introduction}
%%%%%%%%%%%%%%%%%%%%

In the past decade, one witnessed a surge in newly developing
medical imaging  modalities. Their designers pursue the lofty 
goals
of increasing the image resolution, contrast, and safety, while
reducing the costs. A new approach evident in some of these
developments is based upon combining different physical types of
signals in one procedure, with the hope of reducing the deficiencies
of each individual one, and at the same time taking advantage of the
strengths of each. The most well developed example is the {\bf
Thermoacoustic Tomography (TAT)} (also abbreviated as TCT)
\cite{Kruger}. Let us provide a brief description of the TAT
procedure (also see \cite{FPR, Kruger03, Kruger, KuKu, Patch} and
\cite{PAT}-\cite{XWAK}).

A very short radiofrequency (RF) pulse is sent
through a biological object.
\begin{figure}[ht]
\begin{center}
\scalebox{0.7}{\includegraphics{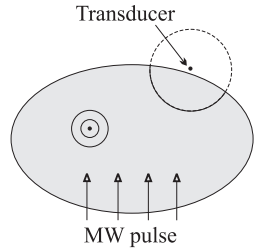}}
\end{center}
\caption{The TAT procedure.}
\label{F:tat}
\end{figure}
\noindent The whole object is assumed to be uniformly irradiated.
Some RF energy is absorbed at each location inside the object. It is
known (e.g., \cite{Kruger,MXW1}) that cancerous cells absorb several
times more energy in the RF range than the healthy ones. Thus, if we
knew the  distribution of the absorption of RF energy function
$f(x)$, this high contrast could provide an easy identification of
cancerous locations. However, RF waves with sufficient penetration
depth are too long to lead to a good resolution, if used for
imaging. Thus, a different mechanism is used for imaging $f(x)$.
Namely, energy absorption causes thermoelastic expansion of the
tissue, which in turn leads to propagation of a pressure wave
$p(x,t)$. This signal is measured by transducers distributed
according to some specifically chosen geometry around the object.
Ultrasound imaging has rather low contrast, but in the TAT
procedure, this is not a problem, since the contrast comes from the
electromagnetic absorption, and ultrasound is only responsible for
high resolution, which it does have. Thus, recovery of $f(x)$ from
the measured characteristics of the pressure $p(x,t)$, is the main
goal of TAT.

The smallness of the ultrasound contrast is in fact a good thing for
TAT. For instance, it has been mostly assumed that the sound speed
is constant. This is the main assumption under which most known
mathematical developments have taken place. In many cases, though,
it is important to take variable sound speed into account, and we
will do so in this paper.

The photoacoustic tomography (PAT) is almost identical to TAT. The
difference is only in how the thermoacoustic signal is generated. In
PAT, a laser pulse is used  instead of an RF one to initiate the
signal \cite{PAT}. The rest of the procedure stays the same. Since
from the mathematical point of view, there is no difference between
TAT and PAT, we will be mentioning TAT only.

Close attention has been paid to developing mathematical methods
required for TAT (almost solely under the constant sound speed
assumption). The mathematical problems of TAT happen to be very
interesting and involving various areas of analysis, in particular
PDEs, integral geometry, microlocal analysis, and spectral theory.
Many of them,  however, also are rather complex (see, e.e. the
survey \cite{KuKu} and references therein)\footnote{We will describe
the mathematical set-up of TAT in the next section.}. After a
substantial effort, major breakthroughs have occurred in the last
couple of years in all relevant issues: uniqueness of
reconstruction, inversion formulas and algorithms, range conditions,
incomplete data problems, and stability. The hardest to come about
were explicit inversion formulas, as well as some uniqueness of
reconstruction results (albeit, for practical geometries, these can
be considered to be resolved \cite{AQ, KuKu}). For quite a while,
only series expansions had been available for the case of
transducers placed along a sphere $S$ surrounding the object 
\cite{Norton1, Norton2}. No formulas were available for non-
spherical surfaces $S$. The first explicit inversion formulas 
were obtained in \cite{FPR} for the spherical geometry in odd 
dimensions and then extended to even dimensions
in \cite{Finch_even}. A different (non-equivalent) set of 
formulas
was developed for any dimension in \cite{Kunyansky} (see also 
such a
formula in $3D$ in \cite{MXW2}). All these formulas are valid for
the spherical geometry only. Another drawback of the inversions 
of \cite{Finch_even, FPR, Kunyansky, MXW2} is that if the source 
$f(x)$ extends beyond the observation surface, the 
reconstruction even of its part that is inside, becomes 
incorrect (see discussion of this phenomenon in \cite{KuKu}). A 
series expansion formula that involves
the spectrum and eigenfunctions of the Dirichlet Laplacian was
obtained in \cite{Kun_series}. Unlike the formulas of
\cite{Finch_even, FPR, Kunyansky, MXW2}, it applies to any 
geometry of measurement and also does not assume that the signal 
comes from the interior of the measurement surface only. It was 
also shown in \cite{Kun_series} that the eigenfunction 
expansions method can be implemented to provide fast and 
accurate reconstructions.

One needs to mention that in the case of the constant sound 
speed,
the reconstruction task can be described as an integral geometry
problem dealing with a spherical mean operator \cite{AKQ, KuKu} 
(see also \cite{Leon_Radon, GGG, Helg_Radon, Helg_groups, 
KuchAMS05, KuchQuinto, Natt_old, Natt_new, Pal_book} and 
references therein for integral geometric methods). This 
relation with
integral geometry all but disappears for non-constant speed.

No explicit inversion formulas are known for non-spherical 
surfaces
or for the case of variable sound speed. There are also some
mysteries surrounding existing formulas. E.g., why do explicit
formulas reconstruct incorrectly the function inside the 
observation
surface, if the function has a part outside the surface? Why do
inversion formulas exist, even so the general machinery of the so
called $\kappa$-operator \cite{GGG1, GGG, Gi}  suggests that one
should not expect such formulas to appear (since one deals with 
a so called ``inadmissible complex'')? Some of these issues are addressed in \cite{KuKu}.

The aim of this article is to develop a very simple general 
operator theory inversion formula, which applies to arbitrary 
geometry of the observation surface (i.e., the surface where 
transducers are placed), variable sound speed under a non-
trapping condition, and functions not necessarily supported 
inside the observation surface. Since the formula involves 
functions of the Dirichlet Laplacian inside $S$, it is not easy 
to apply. We show how it works in some special cases. In 
particular, it leads to eigenfunction 
expansion methods that apply in a wider generality (e.g., in non-
homogeneous media) than those of \cite{Kun_series}. 

One can mention that mathematical problems similar to the ones of
TAT arise in sonar and radar research (e.g., \cite{LQ, NC}).

The structure of the article is a s follows. The next Section
\ref{S:model} sets up the mathematical formulation of the 
problem.
Then, in Section \ref{S:abstract}, we obtain the general 
inversion
formulas. For the clarity sake, they are first derived under the
assumption of a constant sound speed and for a domain of an
arbitrary shape in an odd dimensional space $\RR^n$, so the 
Huygens'
principle holds. Later on, use of the Huygens' principle is 
replaced by local energy decay estimates \cite{Vainb, Vainb2}. 
It is also assumed that functions involved are
smooth, a restriction that can easily be lifted after the final
formulas are derived. Then an abstract result concerning 
hyperbolic
equations in Hilbert spaces is derived that generalizes the one
presented in this simple example. This general formula then is
applied to the case of reconstruction in TAT without assumption 
of constant sound speed. What is relevant, is not the uniformity 
of the background medium, but rather a non-trapping condition. 
Such a condition is absolutely natural, since without it one 
cannot expect any inverse problems of the kind we study to be 
reasonably solvable. We also establish a uniqueness of 
reconstruction result. 
It shows that the data collected is sufficient for recovery of a 
(compactly supported) function, even if its support reaches 
outside the observation surface (albeit the inversion procedure 
recoveres only its interior part). The next Section \ref
{S:remarks} contains additional remarks and discussions. This is 
followed by an Acknowledgements section.

%%%%%%%%%%%%%%%%%%%%%
\section{Mathematical model of TAT}\label{S:model}
%%%%%%%%%%%%%%%%%%%%%

Let the function $f(x)$ to be reconstructed be compactly supported
in $\RR^n$. Let also $S$ be a closed surface, at each point $y$ of
which one places a point detector that measures the value of the
pressure $p(y,t)$ at any moment $t>0$. It is usually assumed (and this 
is crucial for the validity of the formulas of \cite{Finch_even, FPR, 
Kunyansky, MXW2}) that the object (and thus the support of $f(x)$)
is surrounded by $S$. We will not need this assumption here. 

We assume that the ultrasound speed $c(x)$ is known (e.g., being 
determined by transmission
ultrasound measurements \cite{JinWang}). Then, the pressure wave
$p(x,t)$ satisfies the following problem for the standard wave
equation \cite{Diebold,Tam, MXW1}:
\begin{equation}\label{E:wave}
\begin{cases}
    p_{tt}=c^2(x)\Delta_x p, t\geq 0, x\in\RR^3\\
    p(x,0)=f(x),\\
    p_t(x,0)=0
    \end{cases}
\end{equation}
The task is to recover the initial value $f(x)$ at $t=0$ of the
solution $p(x,t)$ from the measured data, which we will call
$g(y,t)$. Incorporating the data obtained from the measurement, the
set of equations (\ref{E:wave}) extends to become
\begin{equation}\label{E:wave_data}
\begin{cases}
    p_{tt}=c^2(x) \Delta_x p, t\geq 0, x\in\RR^3\\
    p(x,0)=f(x),\\
    p_t(x,0)=0\\
    p(y,t)=g(y,t), y\in S\times\RR^+
    \end{cases}
\end{equation}

\begin{figure}[ht]
\begin{center}
\scalebox{0.7}{\includegraphics{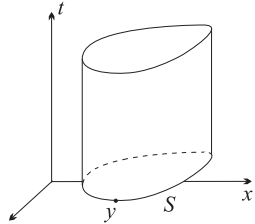}}
\label{F:cylinder}
\end{center}
\caption{An illustration to (\ref{E:wave_data}).}
\label{F:wave}
\end{figure}
The problem now is to find the initial value $f(x)$ in
(\ref{E:wave_data}) from the knowledge of the lateral data $g
(y,t)$ (see Figure \ref{F:cylinder}). Although at a first glance 
one might think that we have insufficient data, this is in fact 
not true. Indeed, there is an additional restriction that the 
solution holds in the whole space, not just inside the cylinder 
$S\times \RR^+$. In
most cases, with this additional information, the data is 
sufficient for recovery of $f(x)$ (see
\cite{KuKu} and references therein for the case of constant 
speed, and further in this paper for the case of a variable 
speed under a non-trapping condition). An additional sometimes 
useful comment (albeit we are not going to use it) is that $p$ 
can be extended as an even function of time, after which it will 
satisfy the wave equation for all values of $t$. The same 
applies to the data $g$ that can also be extended to an even 
function in $t$.

We would like to notice that in the case of constant speed, due 
to the well known Poisson-Kirchhoff formulas \cite[Ch. VI, 
Section 13.2, Formula (15)]{CH} for the solution of (\ref
{E:wave}), there is close
relation (essentially, an equivalence) between this problem and
inversion of the spherical mean operator with centers of spheres 
of integration located on $S$ (see, e.g., \cite{AQ, KuKu} and
references in these papers). This is why many previous
considerations dealt with the spherical mean operator rather than
the wave equation, which allowed for the well developed 
techniques
(or at least hints) of integral geometry to be applied. We cannot
relay upon such techniques, since in our case the wave speed is 
not assumed to be constant.

The standard list of questions one would like to ask in any 
tomographic problem, includes uniqueness of reconstruction, 
inversion formulas and algorithms, their stability, incomplete 
data problems, etc. In this text, we will address only 
uniqueness of reconstruction and inversion. One can find a 
survey of known results on other issues and a bibliography in
\cite{KuKu}.

%%%%%%%%%%%%%%%%%%
\section{Discussion of the problem and the main results}\label{S:abstract}
%%%%%%%%%%%%%%%%%%

In order not to obstacle consideration, we will assume for
simplicity that the sound speed $c(x)$ and the initial function
$f(x)$ to be recovered are infinite differentiable, an assumption
that can be easily significantly weakened. The closed {\em 
observation surface} $S$ of transducers' locations is assumed to 
be sufficiently ``nice''. E.g.,
assuming that it consists of finitely many transversally
intersecting smooth pieces (e.g., a cube), would suffice both 
for practical applications and for our analytic needs, albeit 
this condition can be weakened much further. Thus, we will not 
dwell on this issue. Notice, that unlike the case of all known 
backprojection type formulas in TAT, we do not assume
that the function to be reconstructed is supported inside the
observation surface $S$, on which the transducers are located.

The truly significant assumptions are the following:
\begin{enumerate}
\item Support of $f(x)$ is compact.

\item The sound speed is strictly positive $c(x)>c>0$ and such 
that $c(x)-1$ has
compact support, i.e. $c(x)=1$ for large $x$.

\item Consider the Hamiltonian system in $\RR^{2n}_{x,\xi}$ with 
the Hamiltonian $H=\frac{c^2(x)}{2}|\xi|^2$:
\begin{equation}
\begin{cases}
x^\prime_t=\frac{\partial H}{\partial \xi}=c^2(x)\xi \\
\xi^\prime_t=-\frac{\partial H}{\partial x}=-\frac 12 \nabla 
\left(c^2(x)\right)|\xi|^2 \\
x|_{t=0}=x_0, \xi|_{t=0}=\xi_0.
\end{cases}
\end{equation}
The solutions of this system are called {\em bicharacteristics} 
and 
their projections into $\RR^n_x$ are {\em rays}.

We will assume that the {\bf non-trapping condition} holds, i.e. 
that all
rays (with $\xi_0\neq 0$) tend to infinity when $t \to \infty$.

\end{enumerate}

The first assumption is very important, albeit it could be 
replaced by a
sufficiently fast decay. Without a sufficiently fast decay 
condition, 
there is no  uniqueness of reconstruction \cite{ABK, AQ, KuKu}. 
The 
second condition assumes that the medium outside 
a bounded domain is homogeneous. The third one prohibits 
trapping. If
trapping does occur, then it is naturally expected that 
solvability of
standard inverse problems and problems like controllability 
should most probably fail.

Let us discuss first some peculiarities of the inverse problem of
recovering the initial value $f(x)$ from the lateral boundary 
data $g
(y,t)$ in (\ref{E:wave_data}). As we have mentioned already, if 
there 
were nothing special about the set-up of this problem (i.e., if 
we were
dealing with boundary value problem for the wave equation in the
cylinder), such an inverse problem would not be uniquely 
solvable.
Indeed, the lateral data alone is clearly insufficient. The 
special
situation is that we are dealing in fact with an {\em 
observability}
problem. Namely, the solution $p(x,t)$ satisfies the wave 
equation in the whole space, not just inside the surface $S$, 
albeit we observe the solution on $S$ only. Since the initial 
function is
compactly supported, and since appropriate non-trapping condition
will be imposed, the energy will be locally decaying (e.g.,
\cite{Lax, Vainb, Vainb2}). Thus, the energy of the solution $p$ 
inside 
$S$ will be decaying. The rate of this decay is different in odd
and even dimensions (\cite{Egorov, Vainb, Vainb2}), exponential 
in the odd and power in the even case. In both cases, the 
solution is summable 
with respect to time as a function in appropriate functional spaces in
the domain $B$ bounded by $S$. We will see that this influences the 
solvability of the problem significantly. 

%%%%%%%%%%%%%%
\subsection{A test case: constant sound speed in an odd dimension}\label{SS:odd}
%%%%%%%%%%%%%

We start with the simplest case when the dimension $n$ is odd and 
the sound speed is constant and equal to
$1$. We also assume here that the initial function $f(x)$ is smooth 
and compactly supported.  Then both the solution $p(x,t)$ and the 
boundary data 
$g(x,t)$ are smooth and, due to Huygens' principle, compactly
supported in time, when $x\in B$. Here we denote by $B$ the interior 
domain of $S$. Let now $E$ be the operator of harmonic extension of
functions from $S$ to $B$, which, due to standard regularity
theorems, is continuous from $H^s(S)$ to $H^{s+\frac{1}{2}}(B)$ for any
$s>0$. Then we can rewrite the problem (\ref{E:wave_data}) in terms
of the function $u=p-Eg$:
\begin{equation}\label{E:wave_modified}
\begin{cases}
    u_{tt}- \Delta_x u=-E(g_{tt}), t\geq 0, x\in B\\
    u(x,0)=f(x)-(Eg|_{t=0}),\\
    u_t(x,0)=0\\
    u(y,t)=0, y\in S\times\RR^+
    \end{cases}.
\end{equation}
In particular, if the initial function is supported strictly inside 
$B$ (which we do not need to assume),  then $g|_{t=0}=0$, and thus 
$u(x,0)=f(x)$.

Let us now denote by $\Delta_D$ the Dirichlet Laplacian in $B$, 
defined in the standard  way as an unbounded self-adjoint operator 
in $L^2(B)$.
\begin{theorem}\label{T:main_simplified}
The following representation of $u(x,t)$ holds:
\begin{equation}\label{E:general_simplified}
u(x,t)=\int\limits_t^\infty
\left(-\Delta_D\right)^{-\frac12}\sin{((t-\tau)
\left(-\Delta_D\right)^{\frac12})}E(g_{tt})(x,\tau)d\tau.
\end{equation}
In particular,
\begin{equation}\label{E:reconstruction_simplified}
f(x)=(Eg|_{t=0})-\int\limits_0^\infty
\left(-\Delta_D\right)^{-\frac12} \sin{(\tau
\left(-\Delta_D\right)^{\frac12})}E(g_{tt})(x,\tau)d\tau.
\end{equation}
\end{theorem}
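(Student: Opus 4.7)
The plan is to denote the right-hand side of (\ref{E:general_simplified}) by $v(x,t)$, check that $v$ solves the same Dirichlet problem for the wave equation on $B$ as $u$ and vanishes at the same large time, and then conclude $u = v$ by energy uniqueness. Writing $A := (-\Delta_D)^{1/2}$, I would note that $A$ is a strictly positive self-adjoint operator on $L^2(B)$ and that $A^{-1}\sin(sA)$ maps $L^2(B)$ boundedly into $D(A) \subset H^1_0(B)$ for each $s \in \RR$. Hence $v(\cdot,t) \in H^1_0(B)$, so the Dirichlet condition $v|_S = 0$ from (\ref{E:wave_modified}) is automatic.

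Two ingredients feed the equality $u = v$. First, differentiating $v$ twice in $t$ under the integral sign --- legitimate because the next observation makes the integrand compactly supported in $\tau$ --- and using $\partial_t [A^{-1}\sin((t-\tau)A)] = \cos((t-\tau)A)$, one gets $v_t(x,t) = \int_t^\infty \cos((t-\tau)A)E(g_{tt})(x,\tau)\,d\tau$, the boundary term at $\tau = t$ vanishing because $\sin 0 = 0$; a second differentiation now picks up a nontrivial boundary contribution $-E(g_{tt})(x,t)$, and the spectral identity $-A^2 = \Delta_D$ yields $v_{tt} = -E(g_{tt}) + \Delta_D v$, matching the equation in (\ref{E:wave_modified}). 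Second, the strong Huygens principle for unit speed in odd dimension $n$ and smooth compactly supported $f$ provides $T_0 > 0$ such that $p(\cdot,t) \equiv 0$ on $\overline B$ for $t \geq T_0$; consequently $g$, $Eg$, and $g_{tt}$ all vanish for $t \geq T_0$, so $u \equiv 0$ there and the integrand defining $v$ is supported in $[0,T_0]$, giving $v \equiv 0$ on $[T_0,\infty)$ as well.

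Then $w := u - v$ solves the homogeneous Dirichlet wave equation on $B$ with $w(\cdot,T_0) = 0$ and $w_t(\cdot,T_0) = 0$; the standard energy identity forces $w \equiv 0$, whence $u = v$ on $[0,\infty)$. Specializing to $t = 0$, using $u(x,0) = f(x) - Eg|_{t=0}$ and $\sin(-\tau A) = -\sin(\tau A)$, produces (\ref{E:reconstruction_simplified}). The step I expect to require the most care is the interchange of $\Delta_D$ with the improper integral during the second differentiation: this is precisely where the compact temporal support supplied by Huygens's principle is essential, and rigor will come from pairing against a test vector in $D(\Delta_D)$, using self-adjointness to move $\Delta_D$ onto the test vector, and only then recognizing the spectral identity inside the now finite integral.
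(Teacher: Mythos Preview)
Your argument is correct and rests on the same two pillars as the paper's proof---Huygens' principle for the vanishing at large time, and the fact that the right-hand side of (\ref{E:general_simplified}) solves the inhomogeneous Dirichlet problem (\ref{E:wave_modified})---but the execution is organized differently. The paper projects both sides onto each Dirichlet eigenfunction $\psi_k$, reducing (\ref{E:general_simplified}) to the scalar identity $u_k(t)=\lambda_k^{-1}\int_t^\infty\sin((t-\tau)\lambda_k)\alpha_k(\tau)\,d\tau$, and then observes that this is the unique solution of the ODE $u''+\lambda_k^2 u=-\alpha_k$ vanishing at infinity; completeness of $\{\psi_k\}$ finishes the argument. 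You instead verify the PDE satisfied by the operator-valued integral directly and invoke energy uniqueness backward from a time $T_0$ supplied by Huygens. Your route is arguably cleaner as a self-contained PDE argument and sidesteps any explicit appeal to discreteness of the spectrum; the paper's eigenfunction reduction, on the other hand, immediately yields the coefficient formulas (\ref{E:coef_represent2}) of Theorem~\ref{T:series_wave} and motivates the abstract Theorem~\ref{T:abstract}, so some of the work you save here would reappear when deriving those expansions.
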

\begin{proof}
First of all, the integral in (\ref{E:general_simplified}) makes
sense, since 
$$
\left(-\Delta_D\right)^{-\frac12}\sin{((t-\tau)
\left(-\Delta_D\right)^{\frac12})}
$$ 
is an uniformly bounded strongly
continuous operator function in $L^2(B)$ and
$$
\alpha(t):=E(g_{tt})(\cdot,t)
$$ 
is a smooth compactly supported function
of $t$ with values in $L^2(B)$. Let $\{\lambda_k^2\},\{\psi_k(x)\}$
be the spectrum and an orthonormal basis of eigenfunctions of
$-\Delta_D$, where we assume $\lambda_k>0$. We introduce notations 
$$f_k:=(f,\psi_k),
u_k(t):=(u(\cdot,t),\psi_k), \mbox{ and }\alpha_k(t):=(\alpha(t),
\psi_k)
$$ 
for the Fourier coefficients of $f$, $u$, and $\alpha$, with 
respect to the system $\{\psi_k\}$. Here $(,)$ denotes the scalar 
product in $L^2(B)$.

It is sufficient to prove  (\ref{E:general_simplified}) coupled with
any eigenfunction $\psi_k$:
\begin{equation}\label{E:expanded}
u_k(t)=\int\limits_t^\infty
\left(\left(-\Delta_D\right)^{-\frac12}\sin{((t-\tau)
\left(-\Delta_D\right)^{\frac12})}\alpha(\tau),\psi_k(x)\right)d\tau.
\end{equation}
Using self-adjointness of $\left(-\Delta_D\right)^{-\frac12}\sin{((t-
\tau) \left(-\Delta_D\right)^{\frac12})}$ and the eigenfunction 
property of $\psi_k$, this simplifies to
\begin{equation}\label{E:expanded2}
u_k(t)=\lambda_k^{-1}\int\limits_t^\infty \sin{((t-\tau)
\lambda_k)}\alpha_k(\tau) d\tau.
\end{equation}
It is easy to check that the right hand side in (\ref{E:expanded2})
is the  unique solution vanishing at infinity of the equation
\begin{equation}\label{E:wave_expanded}
u_{tt}=-\lambda_k^2 u- \alpha_k(t).
\end{equation}
However, (\ref{E:wave_expanded}) coincides with the projection of
(\ref{E:wave_modified}) onto the eigenfunction $\psi_k$. Since the
system of eigenfunctions is complete in $L^2(B)$, this proves the
theorem.
\end{proof}
Let us now try to remove the appearance of the extension operator
$E$. In order to do so, we introduce the Fourier coefficients of the
data:
$$
g_k(t)=<g,\frac{\partial \psi_k}{\partial\nu}>:=\int\limits_S g(x,t)
\overline{\frac{\partial \psi_k}{\partial\nu}(x)}dx,
$$
where $\nu$ is the external normal to $S$.
\begin{lemma}\label{E:Fourier_conection}
The following equality holds:
\begin{equation}\label{E:Egk}
(Eg)_k(t)=\lambda_k^{-2}g_k(t).
\end{equation}
In particular,
\begin{equation}\label{E:Egk2}
\alpha_k(t)=\lambda_k^{-2}g^{\prime\prime}_k(t).
\end{equation}
\end{lemma}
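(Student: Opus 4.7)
The plan is to derive (\ref{E:Egk}) by a direct application of Green's second identity to the pair $(Eg(\cdot,t),\psi_k)$ on the domain $B$, exploiting the two defining properties of this pair: $Eg$ is harmonic in $B$ with boundary trace $g$, while $\psi_k$ is a Dirichlet eigenfunction with eigenvalue $-\lambda_k^2$.

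Concretely, I would start from
\begin{equation*}
\int_B \bigl(Eg \cdot \Delta \psi_k - \psi_k \cdot \Delta(Eg)\bigr)\,dx = \int_S \bigl(Eg\,\partial_\nu \psi_k - \psi_k\,\partial_\nu Eg\bigr)\,dS.
\end{equation*}
On the left, the term $\Delta(Eg)$ vanishes because $Eg$ is harmonic, while $\Delta\psi_k = -\lambda_k^2 \psi_k$ contributes a factor that recovers $(Eg)_k(t) = (Eg(\cdot,t),\psi_k)$ up to the constant $-\lambda_k^2$ (the eigenfunctions $\psi_k$ can be chosen real, so the conjugation in the scalar product is innocuous). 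On the right, the term containing $\psi_k$ on $S$ drops out because $\psi_k$ vanishes on the boundary, leaving only the boundary integral of $g\,\partial_\nu \psi_k$, which is, up to sign, exactly $g_k(t)$. Solving the resulting scalar identity for $(Eg)_k(t)$ yields (\ref{E:Egk}); the sign convention for $\nu$ fixes which side the factor $\lambda_k^{-2}$ appears on.

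For the companion identity (\ref{E:Egk2}), the observation is that $E$ acts only in the space variable and is linear, so it commutes with $\partial_t^2$. Thus $\alpha(t) = E(g_{tt})(\cdot,t) = \partial_t^2 (Eg)(\cdot,t)$, and taking the $\psi_k$-Fourier coefficient together with the already established (\ref{E:Egk}) gives
\begin{equation*}
\alpha_k(t) = (E(g_{tt}))_k(t) = \lambda_k^{-2}(g_{tt})_k(t) = \lambda_k^{-2} g_k''(t),
\end{equation*}
where the last equality uses linearity of the boundary pairing with $\overline{\partial_\nu \psi_k}$ and the smoothness of $g(\cdot,t)$ in $t$.

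There is no serious analytic obstacle: all the integrations by parts are justified by the smoothness hypotheses on $f$ and $c$ (and hence on $p$ and $g$) from Section \ref{S:abstract}, together with standard elliptic regularity for the harmonic extension $E$, which places $Eg(\cdot,t)$ in a space smooth enough for Green's formula and ensures that the normal derivative $\partial_\nu \psi_k$ is a well-defined element of $L^2(S)$. The only thing to be careful about is bookkeeping the sign convention for the outward normal $\nu$ used in the definition of $g_k(t)$, so that the factor in (\ref{E:Egk}) comes out as stated rather than with an opposite sign.
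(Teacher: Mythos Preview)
Your argument is correct and is essentially the same as the paper's: both start from $(Eg,\psi_k)$, replace $\psi_k$ by $\lambda_k^{-2}$ times $\Delta\psi_k$ (up to sign), apply Green's formula, and use harmonicity of $Eg$ together with the Dirichlet condition $\psi_k|_S=0$ to reduce the boundary integral to $g_k(t)$. Your version is simply more detailed, in particular in spelling out the second identity via commutation of $E$ with $\partial_t^2$ and in flagging the sign bookkeeping for the outward normal.
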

\begin{proof}
Indeed, $(Eg)_k=(Eg,\psi_k)=\lambda_k^{-2}(Eg,\Delta\psi_k)$. Using
now Green's  formula and harmonicity of $Eg$, one gets
$(Eg)_k=\lambda_k^{-2}<g,\frac{\partial
\psi_k}{\partial\nu}>=\lambda_k^{-2}g_k(t)$.
\end{proof}
The formulas (\ref{E:expanded2}) and (\ref{E:Egk})-(\ref{E:Egk2}) lead 
to
\begin{equation}\label{E:coef_represent}
f_k=\lambda_k^{-2}g_k(0)-\lambda_k^{-3}\int\limits_0^\infty
\sin{(\lambda_k t)} g_k^{\prime\prime}(t)dt.
\end{equation}
Now integration by parts in (\ref{E:coef_represent}) proves the following
\begin{theorem}\label{T:series_wave}
Function $f(x)$ in (\ref{E:wave_data}) can be reconstructed inside
$B$ from  the data $g$ in (\ref{E:wave_data}), as the following
$L^2(B)$-convergent series:
\begin{equation}\label{E:coef_represent2}
f(x)=\sum\limits_k f_k \psi_k(x),
\end{equation}
where the Fourier coefficients $f_k$ can be recovered using one of the following formulas:
\begin{equation}\label{E:coef_represent2}
\begin{cases}
f_k=\lambda_k^{-2}g_k(0)-\lambda_k^{-3}\int\limits_0^\infty
\sin{(\lambda_k t)} g_k^{\prime\prime}(t)dt,\\ 
f_k=\lambda_k^{-2}g_k(0)+\lambda_k^{-2}\int\limits_0^\infty
\cos{(\lambda_k t)} g_k^{\prime}(t)dt, \mbox{ or } \\
f_k=-\lambda_k^{-1}\int\limits_0^\infty \sin{(\lambda_k t)}g_k(t)dt
=-\lambda_k^{-1}\int\limits_0^\infty \int\limits_S\sin{(\lambda_k t)}g(x,t)\overline{\frac{\partial \psi_k}{\partial\nu}(x)}dxdt,
\end{cases}
\end{equation}
where
$$
g_k(t)=\int\limits_{S}g(x,t)\overline{\frac{\partial \psi_k}{\partial
\nu}(x)}dx
$$
and $\nu$ denotes the external normal to $S$.
\end{theorem}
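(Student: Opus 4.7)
The plan is to take formula (\ref{E:coef_represent}) as the starting point, since that identity already establishes the first representation of $f_k$ asserted in the theorem. What remains is to pass from the first formula to the second, and from the second to the third, by integration by parts in $t$, and finally to justify $L^2(B)$-convergence of the resulting Fourier series.

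First, I would observe that, because $n$ is odd, $c \equiv 1$, and $f$ is smooth with compact support, the strong Huygens' principle ensures that $p(x,t)$, and therefore also $g(y,t) = p(y,t)$ for $y \in S$, is compactly supported in $t$ for all $y \in S$. Consequently each Fourier coefficient $g_k(t)$, together with its derivatives $g_k'(t)$ and $g_k''(t)$, vanishes for all sufficiently large $t$. This is the key ingredient that will kill every boundary term at $t = \infty$ when integrating by parts.

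Next, in the integral $\int_0^\infty \sin(\lambda_k t)\, g_k''(t)\,dt$ appearing in the first formula, I would integrate by parts once with $u=\sin(\lambda_k t)$ and $dv = g_k''(t)\,dt$. The boundary term at $t=0$ vanishes since $\sin 0 = 0$, and the boundary term at $t=\infty$ vanishes by the Huygens support. What remains is $-\lambda_k \int_0^\infty \cos(\lambda_k t)\, g_k'(t)\,dt$, and substituting back into (\ref{E:coef_represent}) yields the second formula. A further integration by parts, now with $u = \cos(\lambda_k t)$ and $dv = g_k'(t)\,dt$, produces a boundary contribution $-g_k(0)$ at $t=0$ (again, the contribution at $t=\infty$ vanishes) together with a remainder proportional to $\int_0^\infty \sin(\lambda_k t)\, g_k(t)\,dt$. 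After cancellation of the $\lambda_k^{-2} g_k(0)$ terms this produces the third formula. The explicit expression $g_k(t) = \int_S g(x,t)\, \overline{\partial \psi_k/\partial \nu}(x)\,dx$ is simply the definition from Lemma \ref{E:Fourier_conection}.

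Finally, the convergence of $\sum_k f_k \psi_k$ to $f$ in $L^2(B)$ is immediate, since $f \in L^2(B)$ (being smooth and compactly supported in $\RR^n$, its restriction to $B$ certainly belongs to $L^2(B)$) and $\{\psi_k\}$ is a complete orthonormal system for $L^2(B)$; the coefficients recovered above are by construction exactly the $L^2(B)$-Fourier coefficients of $f$ relative to this basis, as was established in the course of the proof of Theorem \ref{T:main_simplified}. The only subtle point, and the one that would require the most care in a fully written-out proof, is to verify that the boundary terms at $t = \infty$ genuinely vanish, which is why the Huygens' principle assumption (odd $n$, constant speed) is used; once the variable-speed setting is treated later in the paper, this step will have to be replaced by local energy decay estimates.
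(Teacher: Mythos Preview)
Your proposal is correct and follows exactly the route the paper takes: the paper's entire proof is the single sentence ``integration by parts in (\ref{E:coef_represent}) proves the following,'' and you have spelled out precisely those integrations by parts, together with the Huygens-principle justification for the vanishing of the boundary terms at $t=\infty$ and the routine $L^2(B)$-convergence of the eigenfunction expansion. There is nothing to add.
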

\begin{corollary}\label{C:better_smoothness}
The statement of the Theorem with the third of the coefficient formulas 
in (\ref{E:coef_represent2}) holds under much milder conditions on the 
function $f(x)$. For instance, it is sufficient to assume that $f\in 
H^s_{comp}(\RR^n)$ for some $s>\frac 12$.
\end{corollary}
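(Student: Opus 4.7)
The plan is to extend Theorem \ref{T:series_wave} to rough data via a density argument, exploiting that the third formula in (\ref{E:coef_represent2}) does not involve any time derivatives of $g_k$. Let $f \in H^s_{comp}(\RR^n)$ with $s>\frac{1}{2}$ be supported in a compact set $K$. I would choose a mollification $\{f^{(j)}\}_{j\geq 1}\subset C^\infty_c(\RR^n)$ with all $f^{(j)}$ supported in a fixed compact neighborhood $K'$ of $K$ and with $f^{(j)}\to f$ in $H^s(\RR^n)$. Denote by $p^{(j)}$, $g^{(j)}$ the corresponding pressure and boundary data, and by $g^{(j)}_k$ their Fourier coefficients. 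Applying Theorem \ref{T:series_wave} to each $f^{(j)}$ yields
\begin{equation}\label{E:approximate}
(f^{(j)},\psi_k) \;=\; -\lambda_k^{-1}\int_0^\infty \sin(\lambda_k t)\, g^{(j)}_k(t)\, dt,
\end{equation}
and the task reduces to passing to the limit $j\to\infty$ on both sides.

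The left-hand side of (\ref{E:approximate}) converges to $f_k=(f,\psi_k)$ since $H^s$ convergence on $\RR^n$ implies $L^2$ convergence on $B$. For the right-hand side I would invoke two standard facts. First, the Cauchy problem for the wave equation is well-posed in $H^s(\RR^n)$, so $p^{(j)}(\cdot,t)\to p(\cdot,t)$ in $H^s(\RR^n)$ uniformly on compact time intervals. Second, since $s>\frac{1}{2}$, the trace theorem yields $p^{(j)}(\cdot,t)|_S\to p(\cdot,t)|_S$ in $H^{s-1/2}(S)$ uniformly on compact intervals of $t$. Because the Dirichlet eigenfunctions $\psi_k$ are smooth up to $S$, the functions $\frac{\partial\psi_k}{\partial\nu}$ lie in $C^\infty(S)$, so the pairing defining $g_k(t)$ extends continuously to $H^{s-1/2}(S)\times H^{-(s-1/2)}(S)$. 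Hence $g^{(j)}_k\to g_k$ uniformly on compact intervals, and in particular $g_k(t)$ is a continuous function of $t$.

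The ingredient that privileges the third formula is Huygens' principle in odd dimensions: for $f$ supported in $K$, the solution $p(\cdot,t)$ vanishes on $\overline{B}$ for all $t>T_0$, where $T_0=T_0(K,B)$ depends only on the geometry. This support statement remains valid for $H^s$ initial data, either by density combined with finite propagation speed, or directly from the fact that in odd dimensions the fundamental solution of the wave equation is a compactly supported distribution. Consequently the integrand in (\ref{E:approximate}) is supported in $[0,T_0]$ uniformly in $j$, and uniform convergence of $g^{(j)}_k$ on that bounded interval allows us to pass to the limit, yielding the desired formula for $f_k$. The main subtlety is justifying Huygens' principle at $H^s$ regularity; the other two formulas in (\ref{E:coef_represent2}) would fail under this argument precisely because they require classical derivatives of $g_k$ in $t$, which one cannot control from $H^s$ data alone.
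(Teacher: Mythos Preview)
Your proposal is correct and follows essentially the same density argument as the paper: approximate $f$ by $C_0^\infty$ functions, obtain convergence of the boundary data $g$ (the paper phrases this as convergence in $L^2_{comp}(S\times\RR)$, which your trace-theorem and Huygens'-principle steps together establish), and extend the third formula by continuity. You have simply supplied more detail than the paper's two-line sketch, including an explicit explanation of why the condition $s>\tfrac12$ enters via the trace theorem and why only the third formula survives this limiting process.
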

Indeed, one can approximate such a function $f$ in the space $H^s_{comp}
(\RR^n)$ by functions from $C_0^\infty(\RR^n)$, thus getting 
convergence of the data $g(x,t)$ in $L^2_{comp}(S\times\RR)$. Then the 
third formula in (\ref{E:coef_represent2}) extends by continuity.

%%%%%%%%%%%
\subsection{An abstract theorem}
%%%%%%%%%%%

It is not hard to establish an abstract analog of Theorems \ref
{T:main_simplified} and \ref{T:series_wave}. 
\begin{theorem}\label{T:abstract}
Let $H$ be a separable Hilbert space and $A$ be a positive self-adjoint 
operator in $H$ with discrete spectrum $\{\lambda_k^2\}$ and 
orthonormal basis of eigenvectors $\psi_k$. Suppose that function $u:
\RR^+ \to H$ solves the equation
\begin{equation}\label{E:wave_abstract}
u_{tt}=-Au-\alpha(t)
\end{equation} 
and that the following conditions are satisfied:
\begin{enumerate}
\item The function $u$ and its derivative tend to zero when 
$t \to\infty$:
$$
\lim\limits_{t\to\infty} \|u(t)\|_H=\lim\limits_{t\to\infty} \|u_t(t)\|
_H=0.$$
\item The function $\alpha(t)$ belongs to $L^1(\RR^+,H)$.

\end{enumerate}
Then
\begin{enumerate}
\item The following representation of $u(t)$ holds:
\begin{equation}\label{E:abstract_simplified}
u(t)=\int\limits_t^\infty
A^{-\frac12}\sin{((t-\tau)
A^{\frac12})}\alpha(\tau)d\tau.
\end{equation}
In particular,
\begin{equation}\label{E:reconstruction_abstract}
u(0)=-\int\limits_0^\infty
A^{-\frac12} \sin{(\tau
A^{\frac12})}\alpha(\tau)d\tau.
\end{equation}
\item The coefficients $u_k$ of the Fourier series expansion 
$$
u(0)=\sum\limits_k u_k\psi_k
$$
can be found as follows:
$$
u_k=-\lambda_k^{-1}\int\limits_0^\infty
\sin{(\tau
\lambda_k)}\alpha_k(\tau)d\tau,
$$
where $\alpha_k(t)=(\alpha(t),\psi_k)_H$.
\end{enumerate}
\end{theorem}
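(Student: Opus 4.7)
The plan is to reduce the abstract evolution equation to an infinite family of scalar second order ODEs by projecting onto the eigenvectors of $A$, mirroring the proof of Theorem \ref{T:main_simplified}. For each $k$, I set $u_k(t):=(u(t),\psi_k)_H$ and $\alpha_k(t):=(\alpha(t),\psi_k)_H$. Pairing (\ref{E:wave_abstract}) with $\psi_k$ and using $A\psi_k=\lambda_k^2\psi_k$ together with self-adjointness of $A$ yields
$$
u_k''(t)=-\lambda_k^2\,u_k(t)-\alpha_k(t),
$$
while hypothesis (1) forces $u_k(t)\to 0$ and $u_k'(t)\to 0$ as $t\to\infty$.

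The second step is to solve this scalar ODE. I would define
$$
v_k(t):=\lambda_k^{-1}\int_t^\infty\sin((t-\tau)\lambda_k)\alpha_k(\tau)\,d\tau
$$
and check by differentiation under the integral sign that $v_k$ satisfies the same inhomogeneous ODE, and that $v_k(t),v_k'(t)\to 0$ at infinity (this uses only $\alpha_k\in L^1(\RR^+)$, which is a consequence of hypothesis (2)). The difference $w_k:=u_k-v_k$ then solves the homogeneous equation $w_k''+\lambda_k^2 w_k=0$, whose general solution is a linear combination of $\sin(\lambda_k t)$ and $\cos(\lambda_k t)$; the only such solution whose value and derivative both tend to zero at infinity is the trivial one. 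Hence $u_k=v_k$, which gives the coefficient formula in part (2) at $t=0$ and its obvious analogue at arbitrary $t$.

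The third step is to reassemble the operator-valued identity (\ref{E:abstract_simplified}) from these scalar formulas. For each fixed $t,\tau$ the operator $A^{-1/2}\sin((t-\tau)A^{1/2})$ is bounded on $H$, with norm at most $\lambda_1^{-1}$, by the spectral theorem applied to the bounded function $\mu\mapsto \sin((t-\tau)\mu)/\mu$ on $[\lambda_1,\infty)$. Consequently $A^{-1/2}\sin((t-\tau)A^{1/2})\alpha(\tau)$ is Bochner integrable in $\tau$ on $[t,\infty)$, and Parseval's identity combined with Fubini lets me interchange the Fourier sum with the Bochner integral, matching $\sum_k v_k(t)\psi_k$ with the right-hand side of (\ref{E:abstract_simplified}). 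Specialization at $t=0$ then yields (\ref{E:reconstruction_abstract}).

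The main obstacle is the uniqueness argument for the ``vanishing at infinity'' solution of the scalar ODE, which is what extracts the specific integral representation among the two-parameter family of solutions; it is essential to invoke both $u_k(t)\to 0$ and $u_k'(t)\to 0$, since either condition alone would still admit a nontrivial oscillatory homogeneous component. A minor technical issue is the correct interpretation of (\ref{E:wave_abstract}) when $\alpha$ is merely $L^1$; I would read it in the integrated (mild) sense, so that the scalar identity for $u_k$ follows from pairing with $\psi_k$ without regularity concerns.
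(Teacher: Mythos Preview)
Your proposal is correct and follows exactly the approach the paper intends: the paper's own proof consists of the single sentence ``The proof repeats the proofs of Theorems~\ref{T:main_simplified} and~\ref{T:series_wave},'' and your argument---projecting onto each $\psi_k$, identifying the unique decaying solution of the scalar ODE $u_k''+\lambda_k^2 u_k=-\alpha_k$, and reassembling via the spectral theorem---is precisely that. Your explicit justification of the uniqueness step (that a nontrivial combination of $\sin(\lambda_k t)$ and $\cos(\lambda_k t)$ cannot have both value and derivative tending to zero) and of the Bochner integrability in the reassembly step simply spell out details the paper leaves implicit.
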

\begin{proof} The proof repeats the proofs of Theorems \ref
{T:main_simplified} and \ref{T:series_wave}.\end{proof}

%%%%%%%%%%%%%%%%%%%%
\subsection{An application to TAT in inhomogeneous media}
%%%%%%%%%%%%%%%%%%%%

Let us consider now the TAT problem (\ref
{E:wave_data}) in the presence 
of an inhomogeneous background, assuming the 
conditions imposed on the 
sound speed $c(x)$ in the beginning of Section \ref{S:abstract}.
Using the same harmonic extension trick as in the Section \ref{SS:odd}, 
we get
\begin{equation}\label{E:wave_variable}
\begin{cases}
    u_{tt}- c^2(x)\Delta_x u=-E(g_{tt}), t\geq 0, x\in B\\
    u(x,0)=f(x)-(Eg|_{t=0}),\\
    u_t(x,0)=0\\
    u(y,t)=0, y\in S\times\RR^+
    \end{cases}.
\end{equation}
Consider the Hilbert space $H=L^2(B,c^{-2}(x)dx)$, i.e., the weighted 
$L^2$ space with the weight $c^{-2}(x)$. On 
this space, the naturally defined operator 
$$
A=-c^2(x)\Delta
$$ 
in $B$ with zero Dirichlet conditions on $S$ is self-adjoint, 
positive, and has discrete spectrum $\{\lambda_k^2\} (\lambda_k>0)
$ with eigenfunctions $\psi_k(x)
\in H$. Now, since we are in fact dealing with the unobstacled 
whole space wave propagation (the surface $S$ is not truly a 
boundary, but just an 
observation surface) and since we assumed the sound speed constant at 
infinity and non-trapping, the local energy decay type estimates of 
\cite{Vainb, Vainb2} (see also \cite[Theorem 2.104]{Egorov}) apply. 
They, in combination with the standard Sobolev trace theorems, imply in 
particular that the conditions of Theorem \ref{T:abstract} both on $u
(t):=u(\cdot,t)\in H$ and $\alpha(t)=E(g_{tt})$ are satisfied. We thus 
get the following

\begin{theorem}\label{T:main_speed}
\begin{enumerate}
\item The function $f(x)$ in (\ref{E:wave_variable}), and thus in (\ref
{E:wave_data}) can be reconstructed inside $B$ as follows:
\begin{equation}\label{E:reconstruction_variable}
f(x)=(Eg|_{t=0})-\int\limits_0^\infty
A^{-\frac12} \sin{(\tau
A^{\frac12})}E(g_{tt})(x,\tau)d\tau.
\end{equation}

\item Function $f(x)$ can be reconstructed inside
$B$ from the data $g$ in (\ref{E:wave_data}), as the following
$L^2(B)$-convergent series:
\begin{equation}\label{E:coef_variable}
f(x)=\sum\limits_k f_k \psi_k(x),
\end{equation}
where the Fourier coefficients $f_k$ can be recovered using one of 
the following formulas:
\begin{equation}\label{E:coef_variable2}
\begin{cases}
f_k=\lambda_k^{-2}g_k(0)-\lambda_k^{-3}\int\limits_0^\infty
\sin{(\lambda_k t)} g_k^{\prime\prime}(t)dt,\\ 
f_k=\lambda_k^{-2}g_k(0)+\lambda_k^{-2}\int\limits_0^\infty
\cos{(\lambda_k t)} g_k^{\prime}(t)dt, \mbox{ or } \\
f_k=-\lambda_k^{-1}\int\limits_0^\infty \sin{(\lambda_k t)}g_k(t)dt
=-\lambda_k^{-1}\int\limits_0^\infty \int\limits_S\sin{(\lambda_k 
t)}g (x,t)\overline{\frac{\partial \psi_k}{\partial\nu}(x)}dxdt,
\end{cases}
\end{equation}
where
$$
g_k(t)=\int\limits_{S}g(x,t)\overline{\frac{\partial \psi_k}
{\partial \nu}(x)}dx
$$
and $\nu$ denotes the external normal to $S$.
\end{enumerate}
\end{theorem}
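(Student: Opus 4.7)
The plan is to reduce everything to Theorem \ref{T:abstract} applied with $H=L^2(B,c^{-2}(x)dx)$ and $A=-c^2(x)\Delta$ with zero Dirichlet data on $S$. First I would record the operator-theoretic setup: integration by parts shows that the quadratic form $(Au,v)_H=\int_B \nabla u\cdot\overline{\nabla v}\,dx$ is symmetric, non-negative and closable on $H^1_0(B)\subset H$, and the compactness of $B$ gives a compact resolvent, hence $A$ is self-adjoint, positive, with discrete spectrum $\{\lambda_k^2\}$ and an orthonormal eigenbasis $\{\psi_k\}$ in $H$. The equation in (\ref{E:wave_variable}) then takes exactly the abstract form $u_{tt}=-Au-\alpha(t)$ with $\alpha(t)=E(g_{tt})(\cdot,t)$.

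The substantive step is to verify the two hypotheses of Theorem \ref{T:abstract}. Under the non-trapping assumption and because $c(x)=1$ outside a compact set and $f$ has compact support, the local energy decay results of Vainberg \cite{Vainb, Vainb2} (see also \cite[Theorem 2.104]{Egorov}) apply: on any compact set containing $\overline{B}$, $\|p(\cdot,t)\|_{H^1}+\|p_t(\cdot,t)\|_{L^2}$ decays exponentially in odd dimensions and with a sufficient polynomial rate in even dimensions to be integrable in $t$, and the same holds for finitely many time derivatives of $p$. Combining this with the Sobolev trace theorem $H^s(B)\to H^{s-1/2}(S)$ yields decay and time-integrability of $g$ and its $t$-derivatives on $S$ in appropriate Sobolev norms; the continuity of the harmonic extension $E\colon H^{s}(S)\to H^{s+1/2}(B)$ then transfers these bounds to $\alpha(t)=E(g_{tt})(\cdot,t)\in H$, giving $\alpha\in L^1(\RR^+,H)$, and to $u=p-Eg$, giving $\|u(t)\|_H+\|u_t(t)\|_H\to 0$.

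With the hypotheses in hand, formula (\ref{E:reconstruction_abstract}) applied to $u(0)=f-(Eg|_{t=0})$ immediately produces the integral representation (\ref{E:reconstruction_variable}), proving part (1). For part (2), I would first establish the variable-speed analog of Lemma \ref{E:Fourier_conection}. Writing the weighted Fourier coefficient $(Eg)_k(t)=\int_B (Eg)(x,t)\overline{\psi_k(x)}c^{-2}(x)\,dx$ and using the eigenvalue relation $c^{-2}\psi_k=\lambda_k^{-2}(-\Delta\psi_k)$, Green's identity in $B$ together with $\Delta(Eg)=0$ and $\psi_k|_S=0$ gives $(Eg)_k(t)=\lambda_k^{-2}g_k(t)$, and consequently $\alpha_k(t)=\lambda_k^{-2}g_k''(t)$. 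Substituting this into the coefficient formula from Theorem \ref{T:abstract} yields the first expression in (\ref{E:coef_variable2}); the remaining two formulas follow by integrating by parts once and twice in $t$, using that $g_k(t)$ and $g_k'(t)$ vanish at infinity (a consequence of the same local energy decay estimates).

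The main obstacle is the verification of the integrability and decay conditions, which is where the non-trapping hypothesis genuinely enters: one must quote the right energy-decay estimates and carry them through the boundary trace and harmonic extension to the source term $\alpha$ in a way that legitimately puts the problem in the abstract Hilbert-space framework. Once that is done, the remainder of the argument is a routine repetition of the constant-speed derivation from Section \ref{SS:odd}, with the unweighted $L^2(B)$ replaced by $H=L^2(B,c^{-2}dx)$ and $-\Delta_D$ replaced by $A=-c^2\Delta$.
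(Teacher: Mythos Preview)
Your proposal is correct and follows essentially the same route as the paper: reduce to Theorem \ref{T:abstract} in the weighted space $H=L^2(B,c^{-2}dx)$ with $A=-c^2\Delta$, invoke the Vainberg local energy decay estimates together with Sobolev trace theorems and the continuity of $E$ to verify the two hypotheses, and then handle the Fourier coefficients by using the eigenvalue relation $c^{-2}\psi_k=\lambda_k^{-2}(-\Delta\psi_k)$ to convert the weighted pairing $(Eg,\psi_k)_H$ into the unweighted integral $\int_B Eg\,\overline{\Delta\psi_k}\,dx$, after which Green's formula and the constant-speed argument apply verbatim. If anything, you spell out the chain (decay of $p$ $\Rightarrow$ decay of $g$ via trace $\Rightarrow$ decay and integrability of $\alpha=E(g_{tt})$ via extension) and the self-adjointness of $A$ more explicitly than the paper does, but the ideas are the same.
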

\begin{proof}The only thing that might have been different is 
deriving the Fourier coefficient representations (\ref
{E:coef_variable2}) by 
coupling the expression $E(g_{tt})$ with $\psi_k$ in (\ref
{E:reconstruction_variable})
in the weighted $L^2$ space. This, however, causes no problem, 
since
$$
\begin{array}{c}
\int\limits_B E(g_{tt})(x,t) \overline{\psi_k(x)}c^{-2}(x)dx\\
= \lambda_k^{-2}\int\limits_B E(g_{tt})(x,t) \overline{c^2(x)\Delta
\psi_k(x)}c^{-2}(x)dx\\
=\lambda_k^{-2}\int\limits_B E(g_{tt})(x,t) \overline{\Delta\psi_k
(x)}dx,
\end{array}
$$
from where the proof proceeds as before.
\end{proof}

Uniqueness of reconstruction of $f(x)$ inside $B$ obviously 
follows from this Theorem. It, however, requires some additional 
arguments, if one wants to derive uniqueness of recovery of $f(x)$ 
outside $S$. We thus sketch an independent proof of uniqueness of 
reconstruction of the whole function $f(x)$, which does not rely 
upon the previous theorem.
\begin{theorem}\label{T:uniqueness}
Under the non-trapping conditions formulated above, compactly 
supported function $f(x)$ is uniquelly determined by the data $g$. 
(No assumption of $f$ being supported inside $S$ is imposed.)
\end{theorem}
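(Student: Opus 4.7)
The plan is to prove uniqueness by assuming $g\equiv 0$ on $S\times\RR^+$ and deducing $f\equiv 0$. The argument naturally splits into two parts: first show that $f$ vanishes inside $B$, using the same energy/decay mechanism that underlies Theorem~\ref{T:main_speed}, and then upgrade this to vanishing on the exterior of $S$ by a unique continuation argument.

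For the interior part, the hypothesis $p|_S\equiv 0$ turns $p|_B$ into a solution of the homogeneous Dirichlet IBVP in $B$ with Cauchy data $(f|_B,0)$. Integration by parts against $c^{-2}p_t$ and use of $p|_{\partial B}=0$ show that the natural Dirichlet energy
$$E(t)=\int_B\bigl(c^{-2}(x)|p_t|^2+|\nabla p|^2\bigr)\,dx$$
is conserved, so $E(t)\equiv E(0)=\int_B|\nabla f|^2\,dx$. On the other hand, $p$ is simultaneously the restriction to $B$ of the whole-space solution, which under non-trapping satisfies the local energy decay estimates of \cite{Vainb, Vainb2, Egorov}; this forces $E(t)\to 0$. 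Hence $\nabla f\equiv 0$ on $B$, and combined with $f|_S=g(\cdot,0)=0$ this yields $f\equiv 0$ on $B$. Uniqueness for the Dirichlet IBVP then gives $p\equiv 0$ on $B\times\RR^+$.

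For the exterior part, smoothness of $c$ and $f$ makes $p$ smooth on $\RR^n\times\RR^+$, so vanishing of $p$ on the open set $B\times\RR^+$ forces all $x$-derivatives of $p$, and in particular $\partial p/\partial\nu$, to vanish on $\partial B\times\RR^+$. Extending $p$ evenly in $t$ (legitimate since $p_t(\cdot,0)=0$) produces a smooth global solution of the wave equation on $\RR^n\times\RR$ that vanishes on the open set $B\times\RR$. Taking the Fourier transform in $t$, justified because local energy decay together with finite propagation speed make $p(x,\cdot)$ tempered for each $x$, yields a distribution $\tilde p(x,\omega)$ satisfying the Helmholtz equation $(c^2(x)\Delta+\omega^2)\tilde p=0$ with $\tilde p\equiv 0$ for $x\in B$. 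For each real $\omega$ this is an elliptic equation with smooth coefficients, so Aronszajn's unique continuation theorem gives $\tilde p(\cdot,\omega)\equiv 0$ on $\RR^n$; Fourier inversion then yields $p\equiv 0$, and in particular $f=p(\cdot,0)\equiv 0$.

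The main technical obstacle is the exterior step: one must carefully make sense of the Fourier transform in $t$ and interpret the Helmholtz equation $\omega$-slice by $\omega$-slice, since $\tilde p$ is a priori only a joint distribution in $(x,\omega)$. A cleaner alternative, avoiding this issue, is to invoke Tataru's unique continuation theorem for second-order operators with smooth real principal part, applied to $P=\partial_t^2-c^2(x)\Delta$ on the vanishing set $B\times\RR$; non-trapping then ensures that the zero set of $p$ propagates along bicharacteristics through all of $\RR^n\times\RR$, giving $p\equiv 0$ and hence $f\equiv 0$.
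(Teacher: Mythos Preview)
Your proof is correct, and the exterior step---Fourier transform in $t$ followed by unique continuation for the Helmholtz equation $c^2\Delta\tilde p+\omega^2\tilde p=0$ from the open set $B$---is essentially the paper's own argument, including the honest acknowledgment that interpreting the $\omega$-slices rigorously needs some care.

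The interior step, however, differs. The paper does not use energy conservation; instead it takes the Fourier transform of $u(\cdot,t)$ as an $H$-valued function (integrable by local energy decay), obtains a continuous $\hat u(\lambda)\in\ker(A-\lambda^2)$ for every $\lambda$, and then uses \emph{discreteness of the Dirichlet spectrum} of $A=-c^2\Delta$ to force $\hat u\equiv 0$. Your route---$E(t)$ is constant because $p|_S=0$, yet $E(t)\to 0$ by local energy decay of the free-space solution, hence $E(0)=\int_B|\nabla f|^2=0$---is more elementary and makes the mechanism (incompatibility of a conserved interior quantity with global decay) completely transparent; it needs no spectral theory at all. The paper's spectral argument, by contrast, ties the uniqueness proof to the same operator-theoretic framework as the inversion formulas of Theorem~\ref{T:main_speed}, and would adapt more directly to settings where one wants information about individual Fourier coefficients.

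One small correction to your closing alternative: Tataru's unique continuation across the timelike hypersurface $S\times\RR$ (from the Cauchy data $p=\partial_\nu p=0$ there) does not require non-trapping, and the phrase ``the zero set of $p$ propagates along bicharacteristics'' conflates unique continuation with propagation of singularities. Non-trapping enters only in the decay step, not in the continuation step.
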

\begin{proof}Suppose that the data $g$ 
vanishes. Then, inside $B$, due to local energy decay \cite
{Egorov, Vainb, Vainb2}, $u(t)$, as an $H$-valued function, is 
integrable over $\RR$. Taking Fourier transform in time, we get a 
continuous $H$-valued function 
$\hat{u}(\lambda)$ that satisfies the equation $(-\lambda^2+A)\hat
{u}(\lambda)=0$ almost everywhere. However, the spectrum of $A$ is 
discrete, and thus $\hat{u}(\lambda)$ must vanish for all but 
discrete 
set of values of $\lambda$. Due to continuity of $\hat{u}(\lambda)
$, this implies that $\hat{u}(\lambda)$, and thus $u(t)$ vanish. 
This gives vanishing of $f$ inside $B$. (This could also be 
derived immediately from the preceding theorem). Now, the function 
$u$ extends to a solution of the wave equation outside $B$ with 
matching Cauchy data on $S$. After Fourier transform in time, we 
get for each value of $\lambda$ a solution $\hat{u}(x,\lambda)$ of 
the equation $c^2(x)
\Delta \hat{u}+\lambda^2\hat{u}=0$ in $\RR^n$, vanishing inside $S
$. Then standard uniqueness of continuation theorems (remember that 
$c$ is smooth and non-vanishing) prove that $\hat{u}$, and hence $u
$, is zero.\end{proof}

%%%%%%%%%%%%%%%%
\section{Further discussion and remarks}\label{S:remarks}
%%%%%%%%%%
\begin{itemize}

\item The results of this paper are in some sense the realizations of the remarks made in \cite{FPR} concerning inversion by solving that wave equation in reverse time direction and in \cite{AKQ} concerning inversions by eigenfunction expansions (improved and implemented in \cite{Kun_series}).

\item It is instructive to check how the last of the formulas (\ref
{E:coef_represent2}) works in $1D$ in the case of constant speed. 
Let 
us assume that $B=[0,1]$ and $f(x)$ is supported in $B$. Then $
\lambda_k=k\pi$ and $\psi_k(x)=\sqrt{2}\sin{k\pi x}$. The 
d'Alambert 
formula claims that $u(x,t)=\frac{1}{2}\left(f(x-t)+f(x+t)\right)$ 
and thus $g
(0,t)=
\frac{1}{2}f(t)$, $g(1,t)=\frac{1}{2}f(1-t)$ for $t\in [0,1]$ and 
$g=0$ 
for $t>1$. The normal derivatives of the eigenfunctions are
$$
\frac{\partial \psi_k}{\partial \nu}(0)=-\sqrt{2}k\pi, \frac
{\partial 
\psi_k}{\partial \nu}(1)=\sqrt{2}(-1)^k k\pi. 
$$
Now substitution of these expression into the last of the formulas 
(\ref
{E:coef_represent2}) after a short computation amounts to the 
standard 
formula for the Fourier coefficients of $f$ with respect to the 
system $
\psi_k$.

\item Another instructive computation is to compare with the 
expansion formulas from \cite{Kun_series} that were obtained under 
the assumption of a constant sound speed. Consider the $3D$ case. 
One needs to notice that the 
data used in \cite{Kun_series}, although denoted by $g(t)$ as ours, 
is different. In \cite{Kun_series}, it was, up to a factor $4\pi$, 
the integral over the sphere of radius $t$, while ours corresponds 
to the solution of the wave equation (\ref{E:wave_data}). Taking 
this into account, our data (call it temporarily $\tilde{g}$), is 
equal to $\dfrac
{d}{dt}\left(\frac{g(t)}{4\pi t}\right)$. Substitution of this $
\tilde
{g}$ instead of $g$ into the last formula of (\ref
{E:coef_represent2}) and one integration by parts immediately lead 
to the formulas (10), (12), and (13) in \cite{Kun_series}. Note 
however 
that, unlike in \cite{Kun_series}, no knowledge of the whole space 
Green's function is needed. In fact, our formulas hold in the case 
of variable non-trapping sound speed, when one cannot write this 
Green's 
function explicitly and where the method of \cite{Kun_series} fails.

\item Notice that, like in \cite{Kun_series}, and unlike \cite
{Finch_even, FPR, Kunyansky, MXW2}, the inversion procedures 
derived 
in this paper do not require the function $f(x)$ to be supported 
inside the observation surface $S$. Even if the (compact) support 
of $f
$ reaches outside $S$, reconstruction inside $S$ is correct. It is 
known, however, that in this case the backprojection formulas from 
\cite{Finch_even, FPR, Kunyansky, MXW2} produce incorrect 
reconstructions inside $S$. The reason is that the formulas from 
\cite
{Finch_even, FPR, Kunyansky, MXW2} have the information built in 
about 
the support of $f$ being inside. In particular, they use, in our 
notations, only the values of $t$ up to the diameter of the domain 
$B$. It is clear that in this case reconstruction of functions 
supported 
not necessarily inside $S$ is not unique, since, due to the finite 
speed of propagation, information from all support of $f$ might not 
reach $S$ during this time. On the other hand, formulas of this 
paper, as well as of \cite{Kun_series} use the information for all 
values of time $t$. 

\item It has been assumed throughout the paper that the initial 
velocity $p_t(x,0)$ in (\ref{E:wave_data}) is equal to zero. It is 
clear, however, that this is of no importance for our 
considerations, and one can reconstruct the (possibly, non-zero) 
initial velocity as 
well, using formulas (\ref{E:general_simplified}) and (\ref
{E:abstract_simplified}). These formulas were derived without using 
the assumption that $p_t(x,0)=0$.

\item We went along with accepted models \cite{Diebold, JinWang, 
Tam} of non-homegenous media in TAT that lead to (\ref
{E:wave_data}). In these models, variations of density are 
neglected. However, the abstract Hilbert space formalism that we 
provided, allows one to include also variations of density without 
much of a change in formulas and methods. This just leads to a more 
general operator $A$ in $L^2(B)$.

\item The integral formulas (\ref{E:general_simplified}), (\ref
{E:abstract_simplified}), and (\ref{E:reconstruction_variable}) use 
functions of the Laplace (or a more general) operator in $L^2(B)$ 
with Dirichlet boundary conditions on $S$. It is clear how to use 
such formulas in combinations with eigenfunction expansions of the 
operator, as we did in this text. It is, however, less clear how 
such formulas could lead to analytic backprojection type 
inversions, such as the ones in \cite{Finch_even, FPR, 
Kunyansky, 
MXW2}. A Fourier transform consideration that leads to Helmholtz 
type equations suggests that there might be a link to \cite
{Kunyansky}, and 
thus one might hope to produce inversion formulas of \cite
{Kunyansky} obtained for the case of $S$ being a sphere, as a 
consequence of our formulas. The following simple comments 
might be useful for the further progress in this direction.

First of all, the gist of the consideration of Section \ref
{SS:odd} 
is the following simple relation (derived readily using Green's 
formula):
$$
(\Delta_D u)_k= (\Delta u)_k +\int\limits_S u\frac{\partial 
\psi_k}
{\partial \nu} dA(x).
$$

\indent Secondly, reconstruction formulas (\ref
{E:coef_variable})-
(\ref
{E:coef_variable2}) can be combined into the following integral formula:
\begin{equation}\label{E:integral_formula}
f(x)=-\int\limits_0^\infty\int\limits_S \partial_{\nu_y}K(x,y,t)g
(y,t)dA(y)dt,
\end{equation}
where the kernel $K$ is
\begin{equation}\label{E:kernel}
K(x,y,t)=\sum\limits_k\lambda_k^{-1} \sin (\lambda_kt)\psi_k(x)
\psi_k(y).
\end{equation}

It is readily checked that  $K$ satisfies the following 
initial-boundary value problem:
\begin{equation}\label{E:K}
\begin{cases}
    K_{tt}- c^2(y)\Delta_y u=0, t\geq 0, y\in B\\
    K(x, y, 0)= 0,\\
    K_t(x, y, 0)= \delta(y-x), \\
    K(x,y,t)=0, y\in S\times\RR^+
    \end{cases}.
\end{equation}

In this context, the inversion formula (21) can be formally 
derived from the Green's formula:
$$
\begin{array}{c}
0= \int_{B \times [0,T]} [(K_{tt}- c^2(y)\Delta_y K)u- 
K (u_{tt}-c^2(y)\Delta u)]c^{-2}(y)dydt\\
=\int_{S \times [0,T]} [K \partial_{\nu_y}g-\partial_{\nu_y}  K 
g] dA(y)dt  
+\int_{B \times (\{0\} \cup \{T\})} [K_t u- K u_t] dy,
\end{array}
$$
by substituting the initial-boundary conditions, letting
$T \to \infty$ and using the
vanishing of the solutions when $T \to \infty.$

It might be possible in the case of $S$ being a sphere 
to convert the inversion formula for $f$ into a backprojection 
type one.  

\item For the constant sound speed case, it has been known for 
quite some time \cite{ABK, AQ, Ku93} that any closed surface $S$ 
provides uniqueness of reconstruction. The method of \cite{AQ} 
is not applicable, since it relies upon constant sound speed. 
However, spectral methods of \cite{ABK, Ku93} are applicable, as 
it is shown in the proof of Theorem \ref{T:uniqueness}. 

Theorem \ref{T:uniqueness} guarantees unique recovery of any 
compactly supported function $f(x)$, even if its support is not 
confined to $B$. However, microlocal arguments show that 
reconstruction should become unstable for some parts of $f$ 
outside the observation surface $S$. This is related to the 
existence in this case of the so called ``visible'' or 
``audible'' zones of the 
wave front set of $f(x)$, as well as those that are not 
``visible'' (``audible'') \cite{LQ, Pal_book, Quinto}. See \cite
{AKQ, AmbKuch_range, KuKu} for a brief discussion of this issue.

\item Albeit this might not be clear from the text of this 
paper, what has led the authors to this work, was an approach 
through the range conditions of the spherical mean operator 
described in \cite{AKQ}. Indeed, the line of thought was that 
the whole possibility of inversion is based upon a very special 
type of the boundary data $g$ involved. As we have already 
explained, using arbitrary functions as the data $g$ would lead 
to impossibility of reconstruction. 
Thus, the basis of our 
approach was to use our knowledge of the special features of the 
data $g$ to derive inversions. It can be shown (albeit we do not do 
this in this text) that decay with time of solutions inside $B$ is 
directly linked (in the case of constant sound speed) to the range 
descriptions of \cite{AKQ}. Continuing this consideration, one can 
also obtain some analogs of range descriptions for the case of 
variable sound speed.
\end{itemize}

%%%%%%%%%%%%%%%%%%%%%
\section*{Acknowledgments}
%%%%%%%%%%%%%%%%%%%%%

The work of the second author was partially supported by the NSF DMS
grant  0604778. This work was done when both authors visited the
Isaac Newton Institute of Mathematical Sciences in Cambridge. The
authors express their gratitude to the NSF and the INI for this
support. The authors thank L.~Kunyansky, V.~Palamodov, B.~Vainberg, 
and E.~Zuazua for information and useful discussions.
%%%%%%%%%%%%%%%%%%%%%

\end{document}